\def\diam{{\rm diam}}
\def\span{{\rm span}}
\def\rn{{\rm rn}}
\def\vp{\varphi}
\begin{document}

\begin{verbatim}\end{verbatim}\vspace{2.5cm}

\begin{frontmatter}

\title{Radio number for middle graph of paths}

\author{Devsi Bantva\thanksref{myemail}}
\address{Department of Mathematics\\ Lukhdhirji Engineering College, Morvi - 363 642, Gujarat, India}

\thanks[myemail]{Email:\href{mailto:devsi.bantva@gmail.com} {\texttt{\normalshape devsi.bantva@gmail.com} (Devsi Bantva)}}

\begin{abstract}
For a connected graph $G$, let $\diam(G)$ and $d(u,v)$ denote the diameter of $G$ and distance between $u$ and $v$ in $G$. A radio labeling of a graph $G$ is a mapping $\vp: V(G) \rightarrow \{0,1,2,...\}$ such that $|\vp(u)-\vp(v)| \geq \diam(G) + 1 - d(u,v)$ for every pair of distinct vertices $u, v$ of $G$. The span of $\vp$ is defined as span($\vp$) = max\{$|\vp(u)-\vp(v)|$ : $u, v \in V(G)$\}. The radio number $rn(G)$ of $G$ is defined as $\rn(G)$ = min\{span($\vp$) : $\vp$ is a radio labeling of $G$\}. In this paper, we determine the radio number for middle graph of paths. \\

\end{abstract}

\begin{keyword}
Radio labeling, radio number, middle graph.
\end{keyword}

\end{frontmatter}

\section{Introduction}\label{intro}

The channel assignment to radio transmitters is one of the main objectives in setup of wireless communication system. A proper channel assignment to radio transmitters which satisfies interference constraints with maximum use of spectrum is a need of wireless communication system. The interference constraints between a pair of transmitters is closely related with separation of channels and distance between transmitters. In a network, if two transmitters are closer then higher the interference between them and large separation between assign channels to these transmitters is required to mitigate the interference but if the distance between two transmitters is large enough then the same channel or channels with small separation can be use for both transmitters.

In a graph model, introduced by Hale\cite{Hale} in 1980 and later modified by Roberts\cite{Roberts}, the transmitters are represented by vertices of a graph and the interference between a pair of vertices by an edge, such graph model is known as interference graph. The assignment of channels is converted into graph labeling (a graph labeling is an assignment of label to each vertex according to certain rule). Initially only two level interference, namely major and minor, was considered and accordingly two transmitters are classified as very close and close in network. In an interference graph two vertices are very close if they are adjacent and close if they are at distance two apart.

Roberts\cite{Roberts} proposed that a pair of transmitters which has minor interference must receive different channels and a pair of transmitters which has major interference must receive channels that are at least two apart. Motivated through this problem Griggs and Yeh\cite{Griggs} introduced distance two labeling as follows: A distance two labeling (or $L(2,1)$-labeling) of a graph $G$ = $(V(G), E(G))$ is a function $\vp$ from vertex set $V(G)$ to the set of non-negative integers such that $|\vp(u)-\vp(v)| \geq 2$ if $d(u,v)$ = 1 and $|\vp(u)-\vp(v)| \geq 1$ if $d(u,v)$ = 2. The span of $\vp$ is defined as max\{$|\vp(u)-\vp(v)|$ : $u, v \in V(G)$\}. The $\lambda$-number for a graph $G$, denoted by $\lambda(G)$, is the minimum span of a distance two labelings for $G$. The $L(2,1)$-labeling is explored in past two decades by many researchers (see survey articles \cite{Calamoneri} and \cite{Yeh1} for detail).

But as time passed, Chartrand et al.\cite{Chartrand1} introduced the concept of radio labeling by extending the level of interference from two to the largest possible-the diameter of graph $G$ which generalize distance two labeling of graphs; defined as follows.

\begin{definition}
A \emph{radio labeling} of a graph $G$ is a mapping $\vp: V(G) \rightarrow \{0, 1, 2, \ldots\}$ such that for every pair of distinct vertices $u, v$ of $G$,
$$
d(u,v) + |\vp(u)-\vp(v)| \geq \diam(G) + 1.
$$
The \emph{radio number} of $G$ is defined as
$$
\rn(G) := \min_{\vp} \span(\vp)
$$
with minimum taken over all radio labelings $\vp$ of $G$. A radio labeling $\vp$ of $G$ is \emph{optimal} if $\span(\vp) = \rn(G)$. If $\diam(G)$ = 2 then $\rn(G)$ = $\lambda(G)$.
\end{definition}

Determining the radio number of a graph is an interesting but challenging problem. So far the radio number is known only for a handful families of graphs; see \cite{Chartrand} for a survey. Recently, author with others also studied the radio number for trees in \cite{Bantva1,Bantva2}. In this paper, by using the approach similar to one used in \cite{Vaidya1} by Vaidya and Bantva, we determine the radio number for middle graph of paths. However, it should be notice that in \cite{Vaidya1}, authors determined the exact radio number for total graph of paths $P_{2k}$ and gave bounds for the radio number for total graph of paths $P_{2k+1}$ while in this paper, the radio number is completely determined for middle graph of paths $P_{n}$ for any $n$. We follow \cite{West} for graph theoritic terms and notation.

\section{Main Results}

The radio number of paths is investigated by Liu and Zhu in \cite{Liu} as stated in the following result. Note that investigating the radio number for paths which is a very basic graph family, was challenging.
\begin{theorem}\cite{Liu} For any $n \geq 3$,
\begin{eqnarray*}
rn(P_{n}) & = &\left\{
\begin{array}{l}
\begin{tabular}{lll}
$2k(k-1)+1$, & if $n = 2k$, \\
$2k^{2}+2$, & if $n = 2k+1$.
\end{tabular}
\end{array}
\right.
\end{eqnarray*}
\end{theorem}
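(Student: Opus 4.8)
The plan is to prove the formula by a matching pair of bounds, using the same machinery for both parities but with an extra parity correction in the odd case. Throughout I would write $N = \diam(P_n) + 1 = n$, so that the radio condition reads $|\vp(u) - \vp(v)| \ge n - d(u,v)$, and label the vertices along the path $v_1, \ldots, v_n$ so that $d(v_i, v_j) = |i-j|$. Given any radio labeling $\vp$, I would first reindex the vertices as $x_0, x_1, \ldots, x_{n-1}$ in increasing order of label, so that $0 = \vp(x_0) < \vp(x_1) < \cdots < \vp(x_{n-1}) = \span(\vp)$ (the radio condition forces all labels distinct). Telescoping the span and applying the radio condition to each consecutive pair gives
$$
\span(\vp) = \sum_{i=1}^{n-1}\bigl(\vp(x_i) - \vp(x_{i-1})\bigr) \ge \sum_{i=1}^{n-1}\bigl(n - d(x_{i-1}, x_i)\bigr) = n(n-1) - \sum_{i=1}^{n-1} d(x_{i-1}, x_i).
$$
Thus the lower bound hinges on an upper estimate for the total displacement $D := \sum_{i=1}^{n-1} d(x_{i-1}, x_i)$ of the induced ordering.

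To bound $D$ I would introduce the center $c$ of the path (the middle vertex if $n = 2k+1$, the midpoint of the central edge if $n = 2k$) together with the level $\ell(x) = d(x,c)$, using half-integer levels in the even case. Since $d(u,v) \le \ell(u) + \ell(v)$ with equality exactly when $u,v$ lie on opposite sides of $c$, summing yields $D \le 2\sum_{v}\ell(v) - \ell(x_0) - \ell(x_{n-1})$, and the endpoint term is minimized by placing low-level vertices first and last. A direct computation of $\sum_v \ell(v)$ then gives $D \le \lfloor n^2/2\rfloor - 1$, the classical maximum total displacement of a permutation of a path. Substituting into the telescoped bound produces $\span(\vp) \ge 2k^2 - 2k + 1$ when $n = 2k$ and $\span(\vp) \ge 2k^2 + 1$ when $n = 2k+1$. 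The even bound already equals the claimed value $2k(k-1)+1$, and a zigzag ordering that alternates sides of $c$ from the center outward, labelled with the minimal admissible gaps, realizes it; the only pairs not obviously satisfied are those two apart in the order, and for even $n$ the relevant inequality holds with equality, so the construction is valid and the even case is complete.

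The delicate point, and the main obstacle, is the extra $+1$ in the odd case: the value $2k^2 + 1$ coming from consecutive pairs alone is not achievable. The reason is a constraint coming from triples $x_{i-1}, x_i, x_{i+1}$ whose outer two vertices lie on the same side of $c$, which must occur in any ordering attaining the maximal displacement. For such a triple the two tight consecutive gaps sum to $2n - \ell(x_{i-1}) - 2\ell(x_i) - \ell(x_{i+1})$, whereas the radio condition on the same-side pair $(x_{i-1}, x_{i+1})$ demands at least $n - |\ell(x_{i-1}) - \ell(x_{i+1})|$; comparing the two reduces to $n \ge 2\ell(x_i) + 2\min(\ell(x_{i-1}), \ell(x_{i+1}))$. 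For odd $n$ the right-hand side is even while $n$ is odd, so once the levels are pushed to their extremes the inequality fails by exactly one, forcing at least one gap to exceed its minimum and raising the span to $2k^2 + 2$. I would make this rigorous not by analysing a single ordering but by a global summation of the radio inequalities over consecutive and next-consecutive pairs, showing the loss of one unit is unavoidable for every labeling. The matching upper bound is again an explicit zigzag labeling, now arranged so that the unavoidable deficit is concentrated in one gap; verifying that it satisfies the radio condition for all pairs of vertices — the routine but lengthy check — finishes the odd case. I expect the global summation behind the odd lower bound to be the hardest ingredient, since it must convert the local, parity-driven obstruction into a bound valid simultaneously for every radio labeling.
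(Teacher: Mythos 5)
The paper does not prove this statement; it is quoted from Liu and Zhu \cite{Liu} without proof, so your proposal can only be judged on its own merits against the known argument. Your lower-bound framework --- ordering the vertices by label, telescoping the span, and bounding the total consecutive displacement $D$ by levels measured from the center --- is the standard one, and your arithmetic is right: it gives $2k(k-1)+1$ for $n=2k$ and $2k^2+1$ for $n=2k+1$, and you correctly identify the extra $+1$ in the odd case as the crux. (As a side remark, the theorem as quoted here is off at $n=3$: one checks $\rn(P_3)=3$, not $4$, and Liu--Zhu state their result for $n\ge 4$; any correct proof of your ``unavoidable $+1$'' must therefore use $k\ge 2$.)

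Two essential pieces are missing or wrong. First, the upper bound: a ``zigzag that alternates sides of $c$ from the center outward'' does not satisfy the radio condition, and your claim that for even $n$ the two-apart inequality ``holds with equality'' fails beyond $n=4$. For $P_6$ the outward alternating order $v_3,v_5,v_2,v_6,v_1,v_4$ with minimal gaps gives labels $0,4,7,9,10,13$, and then $|\vp(v_1)-\vp(v_2)|=3<5=n-d(v_1,v_2)$. Your own criterion $n\ge 2\ell(x_i)+2\min(\ell(x_{i-1}),\ell(x_{i+1}))$ explains why: in an outward sweep the middle vertex of a late triple has level near the maximum, so the right-hand side approaches $2n-2$. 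The orderings that work (e.g.\ $v_3,v_6,v_2,v_5,v_1,v_4$ for $P_6$, span $13$) take jumps of nearly constant length about $k+1$ and are not monotone in level; designing and verifying such an ordering is a genuine piece of the proof, not a routine check. Second, the odd lower bound: what you give is a parity obstruction for a single same-side triple plus a promise of a ``global summation.'' The difficulty is exactly that a labeling might avoid any particular bad triple and hide its slack elsewhere; turning the local obstruction into a bound valid for every labeling requires a careful classification of the orderings that make all consecutive gaps tight, which is the bulk of Liu and Zhu's proof. As written, the odd case is a plan rather than a proof.
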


The middle graph of a graph $G$, denoted by $M(G)$, is the graph whose vertex set is $V(G) \cup E(G)$ and in which two vertices are adjacent if and only if either they are adjacent edges of $G$ or one is a vertex of $G$ and the other is an edge incident on it. Let $P_{n}$ be a path on $n$ vertices and $M(P_{n})$ denotes the middle graph of path $P_{n}$ then $|V(M(P_{n}))| = 2n-1$. In this work, denote $p = |V(M(P_{n}))|$. The $\lambda$-number for middle graph of paths is determined by Vaidya and Bantva in \cite{Vaidya2} as stated in the following result.

\begin{theorem}\cite{Vaidya2} For the middle graph $M(P_{n})$ of path $P_{n}$,
\begin{eqnarray*}
\lambda(M(P_{n})) & = &\left\{
\begin{array}{l}
\begin{tabular}{lll}
3, & if $n = 2$, \\
4, & if $n = 3$, \\
5, & if $n = 4,5$, \\
6, & if $n \geq 6$. \\
\end{tabular}
\end{array}
\right.
\end{eqnarray*}
\end{theorem}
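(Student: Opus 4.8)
The plan is to make the structure of $M(P_n)$ fully explicit and then obtain matching lower and upper bounds. Write $V(P_n) = \{v_1,\ldots,v_n\}$ and $E(P_n) = \{e_1,\ldots,e_{n-1}\}$ with $e_i = v_iv_{i+1}$, so $V(M(P_n)) = \{v_1,\ldots,v_n\}\cup\{e_1,\ldots,e_{n-1}\}$. From the definition of the middle graph, the edge-vertices $e_1,\ldots,e_{n-1}$ induce a path (consecutive $e_i,e_{i+1}$ are adjacent edges of $P_n$), each $v_i$ is joined only to the edge-vertices incident to it, and no two $v_i$ are adjacent. Thus $M(P_n)$ is a triangular snake: the triangles $T_i = \{e_i,e_{i+1},v_{i+1}\}$ for $1\le i\le n-2$ are glued along the spine $e_1e_2\cdots e_{n-1}$, with $v_1,v_n$ pendant at the two ends. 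A degree count gives $\deg(e_i)=4$ for $2\le i\le n-2$ (neighbours $e_{i-1},e_{i+1},v_i,v_{i+1}$), $\deg(e_1)=\deg(e_{n-1})=3$, $\deg(v_i)=2$ for interior $i$, and $\deg(v_1)=\deg(v_n)=1$; in particular $\Delta(M(P_n)) = 4$ for $n\ge 4$.

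For the lower bounds I would use two observations. First, the standard fact $\lambda(G)\ge \Delta(G)+1$ (the $\Delta$ neighbours of a maximum-degree vertex are pairwise at distance at most $2$, hence receive distinct labels, and the centre must differ from each by at least $2$, so no span-$\Delta$ labeling can avoid a forbidden difference) gives $\lambda(M(P_n))\ge 5$ for $n\ge 4$; the cases $n=2,3$ have $\Delta=2,3$ and yield $3,4$ the same way. The key step is sharpening this to $\lambda(M(P_n))\ge 6$ for $n\ge 6$. Suppose a labeling $f$ has span $5$, so only labels $0,\ldots,5$ occur. For any interior edge-vertex $e_i$ its four neighbours need four distinct labels, each differing from $f(e_i)$ by at least $2$; since $\{0,\ldots,5\}\setminus\{c-1,c,c+1\}$ has size $4$ only when $c\in\{0,5\}$, we get $f(e_i)\in\{0,5\}$, and the neighbours then occupy exactly $\{2,3,4,5\}$ or $\{0,1,2,3\}$. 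When $n\ge 6$ there are three consecutive interior edge-vertices $e_i,e_{i+1},e_{i+2}$; since adjacent ones differ by at least $2$ their labels must alternate, forcing $f(e_i)=f(e_{i+2})$. But $e_i,e_{i+2}$ are both neighbours of $e_{i+1}$, hence at distance $2$, so they must get distinct labels — a contradiction. (For $n=5$ only $e_2,e_3$ are interior, so no three-in-a-row exists and the argument correctly stops short of forcing $6$.)

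For the upper bounds I would exhibit explicit $L(2,1)$-labelings. For $n=2,3,4,5$ these are finite checks. For $n\ge 6$ I would label the spine periodically by the period-$3$ pattern $f(e_i)=0,3,6,0,3,6,\ldots$ and then read off the forced values on the $v$-vertices, obtaining a period-$3$ pattern $f(v_{i+1})=5,1,2,5,1,2,\ldots$, with the pendants $v_1,v_n$ assigned a remaining admissible label. The dangerous pairs are only $\{e_i,e_{i\pm1}\}$, $\{e_i,e_{i\pm2}\}$, $\{v_{i+1},e_{i-1}\}$, $\{v_{i+1},e_{i}\}$, $\{v_{i+1},e_{i+1}\}$, $\{v_{i+1},e_{i+2}\}$, and consecutive $v$'s, so verifying that adjacent labels differ by at least $2$ and distance-$2$ labels differ by at least $1$ reduces to checking a single period. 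All labels lie in $\{0,\ldots,6\}$, so the span is $6$.

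I expect the main obstacle to be the refined lower bound $\lambda\ge 6$ for $n\ge 6$: the naive $\Delta+1$ bound only yields $5$, so one must exploit the global rigidity of the snake. The clean route, as above, is the forced alternation on the spine combined with the distance-$2$ collision of $e_i$ and $e_{i+2}$; the only care needed is to confirm that this obstruction genuinely requires three interior spine vertices, i.e. $n\ge 6$, and does not already appear at $n=5$, which is exactly why the threshold in the statement sits at $6$. A secondary, bookkeeping-heavy point is handling the two ends of the spine in the construction, where $e_1,e_{n-1}$ have degree $3$ and the pendants $v_1,v_n$ must be slotted into the periodic pattern (depending on $n \bmod 3$) without violating any constraint.
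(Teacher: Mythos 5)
The paper does not actually prove this theorem: it is quoted from the cited source as background and is never used in the later radio-number arguments, so there is no in-paper proof to compare yours against. On its own merits, your structural description of $M(P_n)$ is correct, the $\Delta+1$ lower bounds for small $n$ are fine, and your argument that $\lambda(M(P_n))\ge 6$ for $n\ge 6$ (interior spine labels forced into $\{0,5\}$, forced alternation, and the distance-$2$ clash of $e_i$ with $e_{i+2}$) is correct and is exactly the right idea for the main lower bound. The periodic pattern $0,3,6$ on the spine with $5,1,2$ on the internal path-vertices also checks out for the upper bound when $n\ge 6$.

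The genuine gap is at $n=5$, which you dismiss as a ``finite check'': that check cannot succeed, because $\lambda(M(P_5))=6$, so the statement as quoted is itself wrong at $n=5$ and no proof of it can close. To see this, push your own neighbourhood argument one step further. In a span-$5$ labeling $f$ of $M(P_5)$ the two degree-$4$ vertices $e_2,e_3$ are adjacent and each must receive $0$ or $5$, so (up to the complement $x\mapsto 5-x$) $f(e_2)=0$ and $f(e_3)=5$; then $N(e_2)=\{e_1,e_3,v_2,v_3\}$ receives exactly $\{2,3,4,5\}$ and $N(e_3)=\{e_2,e_4,v_3,v_4\}$ exactly $\{0,1,2,3\}$, forcing $f(v_3)\in\{2,3\}$. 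If $f(v_3)=2$, the adjacent pair $e_1,v_2$ receives $\{3,4\}$, violating the difference-$2$ condition; if $f(v_3)=3$, the adjacent pair $e_4,v_4$ receives $\{1,2\}$, the same violation. Hence no span-$5$ labeling exists, while span $6$ is achieved by $e_1,e_2,e_3,e_4\mapsto 0,3,6,0$ and $v_1,\dots,v_5\mapsto 2,5,1,4,2$. Your remark that the three-in-a-row argument ``correctly stops short'' at $n=5$ is true of that particular argument but misleading about the graph: a two-in-a-row refinement already forces $6$. You should either correct the $n=5$ value to $6$ (flagging the discrepancy with the cited source) or restrict your claim to $n\ne 5$; as written, the proposal proves a false case.
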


Now we focus on to derive the radio number for middle graph of paths. Let $v_{1}$, $v_{2}$,...,$v_{n}$ are vertices and $e_{i}$ = $v_{i}v_{i+1}$, $1 \leq i \leq n-1$ are edges of path $P_{n}$ then \{$v_{1}$, $v_{2}$,...,$v_{n}$, $e_{1}$, $e_{2}$,...,$e_{n-1}$\} is the vertex set of $M(P_{n})$. We rename the vertices $e_{i}$ by $v_{i}^{'}$, $1 \leq i \leq n-1$ in $M(P_{n})$. Note that the diameter of $M(P_{n})$ is $n$ and the center of graph $M(P_{n})$ is $K_{1}$ when $n$ is even and $K_{3}$ when $n$ is odd. A vertex $v$ is called a central vertex of $M(P_{n})$ if $v \in V(C(M(P_{n})))$. Note that $v_{n/2}^{'}$ is the central vertex of $M(P_{n})$ when $n$ is even and $v_{\lfloor n/2 \rfloor}^{'}$, $v_{\lfloor n/2 \rfloor+1}^{'}$ and $v_{\lfloor n/2 \rfloor+1}$ are central vertices when $n$ is odd.

Define the level function $L$ : $V(M(P_{n}))$ $\rightarrow$ \{0,1,2,...\} by

$L(u)$ = min\{$d(w,u)$ : $w$ $\in$ $V(C(M(P_{n})))$\} for any $u$ $\in$ $V(M(P_{n}))$.

In a graph $M(P_{n})$, the maximum level is $\lfloor n/2 \rfloor$. Moreover, in middle graph $M(P_{n})$ of paths $P_{n}$, observe that
\begin{eqnarray}\label{obs}
d(u,v)& \leq &\left\{
\begin{array}{l}
\begin{tabular}{lll}
$L(u) + L(v)$, & if $n = 2k$, \\
$L(u) + L(v) + 1$, & if $n = 2k+1$.
\end{tabular}
\end{array}
\right.
\end{eqnarray}

Note that a radio labeling $\vp$ of a graph is a one-to-one integral function from the set of vertices to the set of non-negative integers. Hence any radio labeling induces an ordering $u_{1}$, $u_{2}$,...,$u_{p}$ of vertices such that $0 = \vp(u_{1}) < \vp(u_{2}) < ... < \vp(u_{p}) = \span(\vp)$. We first prove the following result which is useful to prove our main Theorem.

\begin{lemma}\label{lemma1}~~Let $\vp$ be an assignment of distinct non-negative  integers to $V(M(P_{n}))$ and $u_{1}$, $u_{2}$, $u_{3}$ ,..., $u_{p}$ be the ordering of $V(M(P_{n}))$ such that $\vp(u_{i})$ $<$ $\vp(u_{i+1})$ defined by $\vp(u_{1})$ = 0 and $\vp(u_{i+1})$ = $\vp(u_{i}) + d + 1 - d(u_{i}, u_{i+1})$. If $d(u_{i}, u_{i+1})$ $\leq$ $k+1$ for any $1$ $\leq$ $i$ $\leq$ $p-1$, where $k$ = $\lfloor n/2 \rfloor$ then $\vp$ is a radio labeling.
\end{lemma}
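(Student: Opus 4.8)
The plan is to verify directly that the assignment $\vp$ satisfies the radio condition $|\vp(u)-\vp(v)| \geq d + 1 - d(u,v)$ for every pair of distinct vertices, where throughout $d$ denotes $\diam(M(P_{n})) = n$. First I would record that $\vp$ is a genuine labeling: since $d(u_{i},u_{i+1}) \leq k+1 = \lfloor n/2 \rfloor + 1 \leq n = d$, each increment $\vp(u_{i+1}) - \vp(u_{i}) = d + 1 - d(u_{i},u_{i+1})$ is strictly positive, so the values $\vp(u_{i})$ are distinct and strictly increasing along the ordering. Consequently, for $i < j$ we have $\vp(u_{i}) < \vp(u_{j})$, and it suffices to show $\vp(u_{j}) - \vp(u_{i}) \geq d + 1 - d(u_{i},u_{j})$.

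The consecutive case $j = i+1$ is immediate: by the defining recurrence $\vp(u_{i+1}) - \vp(u_{i}) = d + 1 - d(u_{i},u_{i+1})$, so the radio inequality holds with equality. The substance is therefore the non-consecutive case $j \geq i+2$. Here I would telescope the recurrence to write
$$
\vp(u_{j}) - \vp(u_{i}) = \sum_{t=i}^{j-1}\bigl(d + 1 - d(u_{t},u_{t+1})\bigr),
$$
and then bound each summand from below using the hypothesis $d(u_{t},u_{t+1}) \leq k+1$, which gives $d + 1 - d(u_{t},u_{t+1}) \geq d - k = n - k$. Hence $\vp(u_{j}) - \vp(u_{i}) \geq (j-i)(n-k) \geq 2(n-k)$.

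To finish I would observe that $2(n-k) \geq n$ in both parities: when $n = 2k$ we get $2(n-k) = 2k = n$, and when $n = 2k+1$ we get $2(n-k) = 2(k+1) = n+1 > n$. Combining this with the trivial fact that $d(u_{i},u_{j}) \geq 1$ for distinct vertices yields
$$
\vp(u_{j}) - \vp(u_{i}) \geq 2(n-k) \geq n = d \geq d + 1 - d(u_{i},u_{j}),
$$
which is the desired inequality. This settles every non-consecutive pair and completes the verification that $\vp$ is a radio labeling.

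I do not expect a serious obstacle here: the whole argument rests on the single observation that the hypothesis $d(u_{i},u_{i+1}) \leq k+1$ forces each consecutive gap of $\vp$ to be at least $n-k$, after which the non-consecutive constraints follow from the crude estimate $d(u_{i},u_{j}) \geq 1$, because two such gaps already reach the diameter $d = n$. Notably, the coarser distance bound in $(\ref{obs})$ is not needed for this lemma; it will presumably enter only when computing the actual span in the main theorem. The one point demanding care is the parity check $2(n-k) \geq n$, since in the even case it holds with equality and the argument leaves no slack.
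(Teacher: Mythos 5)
Your proof is correct and follows essentially the same route as the paper's: telescope the recurrence over consecutive gaps, bound each gap below by $d+1-(k+1)$ using the hypothesis, use $j-i\geq 2$, and close with $d(u_{i},u_{j})\geq 1$. The only cosmetic difference is that you treat both parities uniformly via $2(n-k)\geq n$ where the paper splits into the cases $n=2k$ and $n=2k+1$, but the computation is identical.
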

\begin{proof} Let $\vp$ be an assignment of distinct non-negative integers to $V(M(P_{n}))$ such that $\vp(u_{1})$ = 0, $\vp(u_{i+1})$ = $\vp(u_{i}) + d + 1 - d(u_{i}, u_{i+1})$ and $d(u_{i}, u_{i+1})$ $\leq$ $k+1$ for any $1$ $\leq$ $i$ $\leq$ $p-1$ holds, where $k$ = $\lfloor n/2 \rfloor$.

We want to prove that $\vp$ is a radio labeling. $i.e.$ for any $i \neq j$, $|\vp(u_{j}) - \vp(u_{i})|$ $\geq$ $d + 1 - d(u_{i}, u_{j})$.

For each $i$ = 1, 2, ..., $p-2$, without loss of generality, let $j \geq i+2$ then
\begin{eqnarray*}
\vp(u_{j}) - \vp(u_{i}) & = & (j-i)(d+1) - d(u_{i}, u_{i+1}) - ... - d(u_{j-1}, u_{j}) \\
 & \geq & (j-i)(d+1) - (j-i)(k+1).
\end{eqnarray*}

\textsf{Case - 1 :} $n = 2k$.

In this case $d$ = $2k$ and hence we obtain,
\begin{eqnarray*}
\vp(u_{j}) - \vp(u_{i}) & \geq & (j-i)(2k+1) - (j-i)(k+1) \\
& = & (j-i)(2k+1-k-1) \\
& = & (j-i)k \\
& \geq & 2k \\
& \geq & d + 1 - d(u_{i}, u_{j}) \mbox{ as } d(u_{i}, u_{j}) \geq 1.
\end{eqnarray*}

\textsf{Case - 2 :} $n = 2k+1$.

In this case $d$ = $2k+1$ and hence we obtain,
\begin{eqnarray*}
\vp(u_{j}) - \vp(u_{i}) & \geq & (j-i)(2k+2) - (j-i)(k+1) \\
& = & (j-i)(2k+2-k-1) \\
& = & (j-i)(k+1) \\
&\geq& 2(k+1) \\
&\geq& d + 1 - d(u_{i}, u_{j}) \mbox{ as } d(u_{i}, u_{j}) \geq 1.
\end{eqnarray*}
Thus, in both the cases $\vp$ is a radio labeling and hence the result.
\end{proof}

\begin{theorem}\label{Thm:lower}~~Let $M(P_{n})$ be a middle graph of path $P_{n}$ on $n$ vertices. Then
\begin{eqnarray*}
rn(M(P_{n})) & \geq &\left\{
\begin{array}{l}
\begin{tabular}{lll}
$4k^{2}-1$, & if $n = 2k$, \\
$4k(k+1)$, & if $n = 2k+1$.
\end{tabular}
\end{array}
\right.
\end{eqnarray*}
\end{theorem}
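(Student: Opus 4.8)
The plan is to bound the span of an arbitrary radio labeling from below by combining the ordering it induces with the distance estimate (\ref{obs}). Let $\vp$ be any radio labeling of $M(P_n)$, with induced ordering $u_1, u_2, \ldots, u_p$ satisfying $0 = \vp(u_1) < \vp(u_2) < \cdots < \vp(u_p) = \span(\vp)$, where $p = 2n-1$ and $d = \diam(M(P_n)) = n$. Writing the span as the telescoping sum $\span(\vp) = \sum_{i=1}^{p-1}(\vp(u_{i+1}) - \vp(u_i))$ and applying the radio condition $\vp(u_{i+1}) - \vp(u_i) \ge d + 1 - d(u_i, u_{i+1})$ to each consecutive pair gives
$$\span(\vp) \ge (p-1)(d+1) - \sum_{i=1}^{p-1} d(u_i, u_{i+1}).$$
The point is that the sum of consecutive distances can be bounded \emph{above} via (\ref{obs}), and once the resulting level terms telescope, the only surviving dependence on the ordering sits in the two boundary vertices $u_1$ and $u_p$.

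Next I would carry out this telescoping separately for the two parities. When $n = 2k$, inequality (\ref{obs}) gives $d(u_i, u_{i+1}) \le L(u_i) + L(u_{i+1})$, and summing yields $\sum_{i=1}^{p-1} d(u_i, u_{i+1}) \le 2\sum_{i=1}^{p} L(u_i) - L(u_1) - L(u_p)$; when $n = 2k+1$ the same computation with the extra $+1$ in (\ref{obs}) contributes an additional $(p-1)$. Substituting back, in both cases the span is bounded below by an explicit constant plus the boundary term $L(u_1) + L(u_p)$.

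The crucial ingredient is the total level sum $\sum_{v \in V(M(P_n))} L(v)$, which is a fixed number independent of $\vp$ since $\{u_1,\ldots,u_p\}$ runs over all vertices. Using the explicit description of the center, namely $v_k'$ when $n = 2k$ and the triangle $\{v_k', v_{k+1}', v_{k+1}\}$ when $n = 2k+1$, I would determine $L(v_j)$ and $L(v_j')$ for every vertex and sum the resulting arithmetic progressions; in both parities this evaluates to $\sum_v L(v) = 2k^2$. Plugging $p-1$, $d+1$, and $2k^2$ into the displayed bound then yields $\span(\vp) \ge 4k^2 - 2 + L(u_1) + L(u_p)$ for $n = 2k$ and $\span(\vp) \ge 4k(k+1) + L(u_1) + L(u_p)$ for $n = 2k+1$.

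Finally I would dispose of the boundary term. In the odd case $L(u_1) + L(u_p) \ge 0$ already delivers the claimed bound $4k(k+1)$. In the even case the center is the single vertex $v_k'$, so among the distinct vertices $u_1, u_p$ at most one can have level $0$, forcing $L(u_1) + L(u_p) \ge 1$ and hence $\span(\vp) \ge 4k^2 - 1$. As $\vp$ was arbitrary, the two lower bounds for $\rn(M(P_n))$ follow. I expect the main obstacle to be the careful case analysis needed to pin down every value $L(v_j)$ and $L(v_j')$ and to verify that the four arithmetic sums really collapse to $2k^2$ in both parities; the boundary argument, by contrast, is short but is precisely what separates the even bound $4k^2-1$ from the bare value $4k^2-2$.
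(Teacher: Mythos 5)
Your proposal is correct and follows essentially the same route as the paper: telescoping the radio condition over the induced ordering, bounding $\sum_{i} d(u_i,u_{i+1})$ above via the level inequality (\ref{obs}), using $\sum_{v} L(v) = 2k^2$, and handling the boundary term $L(u_1)+L(u_p)$ by parity of $n$. Your explicit justification that $L(u_1)+L(u_p)\geq 1$ in the even case (since the center is a single vertex) is exactly the step the paper uses, stated there only implicitly.
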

\begin{proof} Let $\vp$ be an optimal radio labeling for $M(P_{n})$ with a linear order $u_{1},u_{2},...,u_{p}$ of vertices of $M(P_{n})$ such that 0 = $\vp(u_{1})$ $<$ $\vp(u_{2})$ $<$ $\vp(u_{3})$ $<$ ... $<$ $\vp(u_{p})$. Then $\vp(u_{i+1}) - \vp(u_{i})$ $\geq$ $(d+1) - d(u_{i}, u_{i+1})$ for all $1 \leq i \leq p-1$. Summing up these $p-1$ inequality we get
\begin{equation}\label{eqn1}
rn(M(P_{n})) = \vp(u_{p})\geq (p-1)(d+1) - \displaystyle \sum_{i = 1}^{p-1} d(u_{i}, u_{i+1})
\end{equation}

\textsf{Case - 1 :} $n$ = $2k$.

For $M(P_{2k})$, using (\ref{obs}), we have
\begin{eqnarray}
\displaystyle \sum_{i = 1}^{p-1} d(u_{i}, u_{i+1}) & \leq & \displaystyle \sum_{i = 1}^{p-1} [L(u_{i}) + L(u_{i+1})] \nonumber\\
& = & 2\displaystyle \sum_{u \in V(G)} L(u) - L(u_{1}) - L(u_{p}) \nonumber\\
& \leq & 2\displaystyle \sum_{u \in V(G)} L(u) - 1 \label{eqn2}
\end{eqnarray}

Substituting (\ref{eqn2}) in (\ref{eqn1}), we get

$rn(M(P_{2k}))$ = $\vp(u_{p})$ $\geq$ $(p-1)(d+1) - 2\displaystyle \sum_{u \in V(G)} L(u) + 1$

For $M(P_{2k})$, $p$ = $4k-1$, $d$ = $2k$ and $\sum_{u \in V(G)} L(u)$ = $2k^{2}$. Substituting these into above equation, we get
\begin{eqnarray*}
rn(M(P_{2k})) &=& \vp(u_{p}) \\
&\geq& (4k-1-1)(2k+1) - 4k^{2} + 1 \\
&=& 4k^{2} - 1
\end{eqnarray*}

\textsf{Case - 2 :} $n$ = $2k+1$.

For $M(P_{2k+1})$, using (\ref{obs}), we have
\begin{eqnarray}\label{eqn3}
\displaystyle \sum_{i = 1}^{p-1} d(u_{i}, u_{i+1}) & \leq &\displaystyle \sum_{i = 1}^{p-1} [L(u_{i}) + L(u_{i+1}) + 1] \nonumber \\
& = & 2\displaystyle \sum_{u \in V(G)} L(u) - L(u_{1}) - L(u_{p}) + (p-1) \nonumber \\
& \leq & 2\displaystyle \sum_{u \in V(G)} L(u) + (p-1)
\end{eqnarray}

Substituting (\ref{eqn3}) in (\ref{eqn1}), we get
\begin{eqnarray*}
rn(M(P_{2k+1})) & = & \vp(u_{p}) \geq (p-1)(d+1) - 2\displaystyle \sum_{u \in V(G)} L(u) - (p-1) \\
& = & (p-1)d - 2\displaystyle \sum_{u \in V(G)} L(u)
\end{eqnarray*}
For $M(P_{2k+1})$, $p$ = $4k+1$, $d$ = $2k+1$ and $\sum_{u \in V(G)} L(u)$ = $2k^{2}$. Substituting these into above equation, we get
\begin{eqnarray*}
rn(M(P_{2k+1})) &=& \vp(u_{p}) \\
&\geq& (4k+1-1)(2k+1) - 4k^{2} \\
&=& 4k(k+1)
\end{eqnarray*}
Thus, we have
\begin{eqnarray*}
rn(M(P_{n})) & \geq &\left\{
\begin{array}{l}
\begin{tabular}{ll}
$4k^{2}-1$, & if $n = 2k$, \\
$4k(k+1)$, & if $n = 2k+1$.
\end{tabular}
\end{array}
\right.
\end{eqnarray*}
\end{proof}

\begin{theorem}\label{Thm:up}~~Let $M(P_{n})$ be a middle graph of path $P_{n}$ on $n$ vertices. Then
\begin{eqnarray}\label{eq:up}
rn(M(P_{n})) & \leq &\left\{
\begin{array}{l}
\begin{tabular}{ll}
$4k^{2}-1$, & if $n = 2k$, \\
$4k(k+1)$, & if $n = 2k+1$.
\end{tabular}
\end{array}
\right.
\end{eqnarray}
\end{theorem}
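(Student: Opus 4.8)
The plan is to establish the matching upper bound by exhibiting, for each parity of $n$, an explicit radio labeling of $M(P_n)$ whose span equals the lower bound of Theorem \ref{Thm:lower}. I would first fix a convenient ordering $u_1, u_2, \ldots, u_p$ of $V(M(P_n))$ and then define $\vp$ greedily by $\vp(u_1) = 0$ and $\vp(u_{i+1}) = \vp(u_i) + (d+1) - d(u_i,u_{i+1})$, exactly as in Lemma \ref{lemma1}. With this definition the span telescopes to
$$
\vp(u_p) = (p-1)(d+1) - \sum_{i=1}^{p-1} d(u_i, u_{i+1}),
$$
so the whole task reduces to choosing the ordering so that (i) every consecutive distance satisfies $d(u_i,u_{i+1}) \le k+1$, which by Lemma \ref{lemma1} makes $\vp$ a radio labeling (and forces $\vp$ to be strictly increasing, hence injective), and (ii) the sum $\sum_i d(u_i,u_{i+1})$ attains the maximum permitted by (\ref{obs}), namely $2\sum_u L(u) - 1 = 4k^2 - 1$ when $n=2k$ and $2\sum_u L(u) + (p-1) = 4k(k+1)$ when $n=2k+1$. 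Matching these two values against the telescoped span reproduces the bounds in (\ref{eq:up}).

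To make (ii) hold I would split $V(M(P_n))$ into the central vertices and two branches $L$ and $R$ lying on opposite sides of the centre (roughly the vertices of index below and above $k$, together with the path vertices hanging from them). A short distance computation shows that for $u,v$ in opposite branches the inequality in (\ref{obs}) is an equality, i.e. $d(u,v) = L(u)+L(v)$ when $n=2k$ and $d(u,v)=L(u)+L(v)+1$ when $n=2k+1$, while on each branch the level $j$ occurs exactly twice for $1 \le j \le k-1$ and once for $j=k$. Hence if the ordering alternates strictly between $R$ and $L$, then every consecutive pair is tight in (\ref{obs}), and the distance sum becomes $2\sum_u L(u) - L(u_1) - L(u_p)$ (plus $(p-1)$ in the odd case). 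Taking the two extreme labels to have minimal total level then yields precisely the target value.

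For $n = 2k$ the centre is the single vertex $v_k'$, which is tight to both branches, so I would set $u_1 = v_k'$ and alternate $R,L,R,L,\ldots$, finishing at a level-$1$ vertex so that $L(u_1)+L(u_p)=0+1=1$. The only remaining point is to order the levels within the two branches so that every cross-sum $L(u_i)+L(u_{i+1})$ stays $\le k+1$; since the average consecutive distance is about $k$ there is room, and one arranges the levels in a high-then-low interleaving (pairing the level-$k$ vertex of one branch with a level-$1$ vertex of the other) so that no two large levels become adjacent. For $n = 2k+1$ the centre is the triangle $\{v_k',v_{k+1}',v_{k+1}\}$ with $L(u_1)=L(u_p)=0$ forced, and the constraint tightens to $L(u_i)+L(u_{i+1}) \le k$; in particular the two level-$k$ vertices $v_1$ and $v_{2k+1}$ must sit next to central vertices. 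Here one must also respect that $v_k'$ is tight only towards $R$, $v_{k+1}'$ only towards $L$, while $v_{k+1}$ is tight towards both, so I would place $v_k'$ and $v_{k+1}'$ at the two ends with the two level-$k$ vertices as their neighbours and $v_{k+1}$ in the interior, again interleaving the branch levels high-to-low.

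The main obstacle I anticipate is the tension between (i) and (ii): attaining the maximal distance sum forces every consecutive pair onto opposite branches and as large as (\ref{obs}) allows, yet Lemma \ref{lemma1} simultaneously caps each distance at $k+1$. Reconciling these requires a careful interleaving of the branch levels so that each high level is adjacent only to a sufficiently low one, and this bookkeeping is most delicate for the top level $k$ and, in the odd case, around the three central vertices whose tightness behaviour is asymmetric. Verifying the explicit distance formula for opposite-branch pairs, including the pairs that involve a central vertex, is the other step that needs care; once these are checked, both the radio property and the exact span follow mechanically from the telescoping identity above.
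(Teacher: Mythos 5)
Your overall strategy is the same as the paper's: order the vertices so that consecutive pairs alternate across the centre, define $\vp$ greedily so that the span telescopes to $(p-1)(d+1)-\sum_i d(u_i,u_{i+1})$, invoke Lemma \ref{lemma1} for the radio condition, and force every consecutive pair to be tight in (\ref{obs}) with $L(u_1)+L(u_p)$ minimal. For $n=2k$ this plan is sound and is essentially the paper's construction ($u_1=v_k'$, $u_p=v_k$, interleaving the two branches so that every consecutive level-sum is at most $k+1$).

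In the odd case, however, there is a genuine gap: the two requirements you want to impose on every consecutive pair --- tightness $d(u_i,u_{i+1})=L(u_i)+L(u_{i+1})+1$ and the Lemma \ref{lemma1} cap $d(u_i,u_{i+1})\le k+1$ --- are jointly infeasible for $k\ge 2$. They force $L(u_i)+L(u_{i+1})\le k$, so each of the two level-$k$ vertices $v_1$ and $v_{2k+1}$ must have \emph{every} ordering-neighbour among the three level-$0$ central vertices, and moreover tight to it (which already rules out $v_k'$ as a neighbour of $v_1$ and $v_{k+1}'$ as a neighbour of $v_{2k+1}$). In your proposed arrangement $u_1=v_k'$, $u_2=v_{2k+1}$, $u_p=v_{k+1}'$, $u_{p-1}=v_1$, the positions $u_3$ and $u_{p-2}$ must then both equal $v_{k+1}$, which is only possible when $p=5$, i.e.\ $k=1$; a general count of level-$0$ slots gives the same contradiction for any placement. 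The paper resolves this by abandoning the cap for exactly one step: it makes $v_1$ and $v_{2k+1}$ consecutive ($u_{p-2}=v_1$, $u_{p-1}=v_{2k+1}$), a pair at distance $2k+1>k+1$ for which Lemma \ref{lemma1} does not apply, and notes that the radio condition must be checked directly there (the label gap is $d+1-d(v_1,v_{2k+1})=1$, and the pairs straddling this step also need verification). Without this exceptional pair, or some equivalent device, your construction cannot simultaneously achieve the span $4k(k+1)$ and satisfy the hypothesis of Lemma \ref{lemma1}, so the odd case of your proof does not go through as written.
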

\begin{proof} We consider the following two cases.

\textsf{Case - 1 :} $n$ = $2k$.~~For $M(P_{2k})$, define $\vp$ : $V(M(P_{2k}))$ $\rightarrow$ \{0,1,2, ... ,$4k^{2}-1$\} by $\vp(u_{1})$ = 0 and $\vp(u_{i+1})$ = $\vp(u_{i}) + d + 1 - L(u_{i}) - L(u_{i+1})$ for all $1 \leq i \leq p-1$ as per following ordering of vertices.

We first set $u_{1}$ = $v_{k}^{'}$, $u_{p}$ = $v_{k}$ and for $2 \leq i \leq p-1$, set
$u_{j}$ := $v_{i}$, where
\begin{eqnarray*}
j & = &\left\{
\begin{array}{l}
\begin{tabular}{ll}
$4(k-i)+1$, & if $i < k$, \\
$4(2k-i)+2$, & if $i > k$.
\end{tabular}
\end{array}
\right.
\end{eqnarray*}
and $u_{j}$ := $v_{i}^{'}$, where
\begin{eqnarray*}
j & = &\left\{
\begin{array}{l}
\begin{tabular}{ll}
$4(k-i-1)+3$, & if $i < k$, \\
$4(2k-i)$, & if $i > k$.
\end{tabular}
\end{array}
\right.
\end{eqnarray*}

\textsf{Case - 2 :} $n$ = $2k+1$.~~For $M(P_{2k+1})$, define $\vp$ : $V(M(P_{2k+1}))$ $\rightarrow$ \{0,1,2, ... ,$4k(k+1)$\} by $\vp(u_{1})$ = 0 and $\vp(u_{i+1})$ = $\vp(u_{i}) + d - L(u_{i}) - L(u_{i+1})$ for all $1 \leq i \leq p-1$ as per following ordering of vertices.

We first set $u_{1}$ = $v_{k}^{'}$, $u_{p}$ = $v_{k+1}$, $u_{p-1}$ = $v_{n}$, $u_{p-2}$ = $v_{1}$, $u_{p-3}$ = $v_{k+1}^{'}$ and for $2 \leq i \leq p-4$, set
$u_{j}$ := $v_{i}$, where
\begin{eqnarray*}
j & = &\left\{
\begin{array}{l}
\begin{tabular}{ll}
$4(k+1-i)+1$, & if $1 < i \leq k$, \\
$4(2k-i)+2$, & if $k+1 < i < 2k+1$.
\end{tabular}
\end{array}
\right.
\end{eqnarray*}
and $u_{j}$ := $v_{i}^{'}$, where
\begin{eqnarray*}
j & = &\left\{
\begin{array}{l}
\begin{tabular}{ll}
$4(k-i-1)+3$, & if $i < k$, \\
$4(2k+1-i)$, & if $i > k+1$.
\end{tabular}
\end{array}
\right.
\end{eqnarray*}

Thus, in each case above, we have defined a linear order $u_{1}, u_{2},...,u_{p}$ of the vertices of $M(P_{n})$ which satisfies the conditions of Lemma \ref{lemma1} (note that in case of $M(P_{2k+1})$, $d(u_{p-2},u_{p-1}) \not\leq k+1$ but it is easy to verify radio labeling condition for these vertices) whose span equal to the right-hand side of (\ref{eq:up}). Thus, we have
\begin{eqnarray*}
rn(M(P_{n})) & \leq &\left\{
\begin{array}{l}
\begin{tabular}{lll}
$4k^{2}-1$, & if $n = 2k$, \\
$4k(k+1)$, & if $n = 2k+1$.
\end{tabular}
\end{array}
\right.
\end{eqnarray*}
\end{proof}

\begin{theorem} Let $M(P_{n})$ be a middle graph of path $P_{n}$ on $n$ vertices. Then
\begin{eqnarray*}
rn(M(P_{n})) & = &\left\{
\begin{array}{l}
\begin{tabular}{ll}
$4k^{2}-1$, & if $n = 2k$, \\
$4k(k+1)$, & if $n = 2k+1$.
\end{tabular}
\end{array}
\right.
\end{eqnarray*}
\end{theorem}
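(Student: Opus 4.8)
The plan is to establish the exact value by squeezing $rn(M(P_n))$ between matching lower and upper bounds, which are precisely the content of Theorem~\ref{Thm:lower} and Theorem~\ref{Thm:up}. Since those two results are already available, the final equality follows immediately: for $n=2k$ one has $4k^{2}-1 \leq rn(M(P_n)) \leq 4k^{2}-1$, and for $n=2k+1$ one has $4k(k+1) \leq rn(M(P_n)) \leq 4k(k+1)$. Thus the genuine work lies in the two bounding theorems, and I describe how I would obtain each.

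For the lower bound I would begin from an arbitrary optimal radio labeling $\vp$ and the linear order $u_1,\dots,u_p$ it induces. Summing the $p-1$ consecutive gap inequalities $\vp(u_{i+1})-\vp(u_i) \geq (d+1)-d(u_i,u_{i+1})$ telescopes the left-hand side to $\vp(u_p)=rn(M(P_n))$, reducing the whole problem to finding a good upper bound for $\sum_i d(u_i,u_{i+1})$. The key device is the level function $L$ together with inequality~(\ref{obs}), which replaces each consecutive distance by $L(u_i)+L(u_{i+1})$ (plus one in the odd case). After telescoping, every vertex contributes its level twice except the two endpoints, so the bound depends only on the global quantity $\sum_{u} L(u)$. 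Computing this sum explicitly (it equals $2k^{2}$ in both parities) and substituting the known values of $p$ and $d$ then yields the stated numbers.

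For the upper bound I would exhibit an explicit ordering of the $2n-1$ vertices and define $\vp$ greedily so that every consecutive gap inequality becomes an equality. The construction must alternate between vertices of high and low level around the center in such a way that the hypothesis of Lemma~\ref{lemma1}, namely $d(u_i,u_{i+1}) \leq k+1$, holds; this guarantees that the greedy assignment is an honest radio labeling whose span matches the lower bound. The main obstacle is precisely finding this ordering and verifying the distance condition: one must interleave the $v_i$ and the $v_i'$ carefully so that consecutive pairs stay within distance $k+1$, and in the odd case one must separately confirm, by a direct check, that the single consecutive pair for which this bound fails still satisfies the radio inequality. Once both bounds coincide, the theorem is proved.
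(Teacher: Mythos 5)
Your proposal matches the paper exactly: the final theorem is proved by combining the lower bound of Theorem~\ref{Thm:lower} with the upper bound of Theorem~\ref{Thm:up}, and your sketches of both bounds (telescoping the consecutive gap inequalities via the level function and inequality~(\ref{obs}) for the lower bound, and an explicit ordering verified through Lemma~\ref{lemma1} for the upper bound, including the one exceptional consecutive pair in the odd case) follow the same route as the paper's arguments. No gaps; this is essentially the paper's proof.
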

\begin{proof} The proof follows from Theorem \ref{Thm:lower} and Theorem \ref{Thm:up}.
\end{proof}

\end{document}